\newtheorem{thm}{Theorem}[section]
\newtheorem{lem}[thm]{Lemma}
\newtheorem{main}{Theorem}
\newcommand{\aut}[1]{\textnormal{Aut}(#1)}
\begin{document}

\title[Enhanced power graphs with universal vertices]{Characterizing finite groups whose enhanced power graphs have universal vertices}

\author[D.G.\ Costanzo]{David G.\ Costanzo}
\address{School of Mathematical and Statistical Sciences, O-110 Martin Hall, Box 340975, Clemson University, Clemson, SC 29634, USA}
\email{davidgcostanzo@gmail.com}

\author[M.L.\ Lewis]{Mark L.\ Lewis}
\address{Department of Mathematical Sciences, 1300 Lefton Esplanade, Kent State University, Kent, OH 44242, USA}
\email{lewis@math.kent.edu}

\author[S.\ Schmidt]{Stefano Schmidt}
%\address{address}
%\email{email}

\author[E.\ Tsegaye]{Eyob Tsegaye}
\address{Department of Mathematics, Princeton University, Princeton, NJ 08540}
\email{eyob@princeton.edu}

\author[G.\ Udell]{Gabe Udell}
\address{Department of Mathematics, Malott Hall, 212 Garden Ave, Cornell University, Ithaca, NY 14853}
\email{gru5@cornell.edu}

\subjclass[2010]{Primary 20D25; Secondary 05C25}

\keywords{enhanced power graph, universal vertices, diameter}

\begin{abstract}
Let $G$ be a finite group and construct a graph $\Delta(G)$ by taking $G\setminus\{1\}$ as the vertex set of $\Delta(G)$ and by drawing an edge between two vertices $x$ and $y$ if $\langle x,y\rangle$ is cyclic.
Let $K(G)$ be the set consisting of the universal vertices of $\Delta(G)$ along the identity element.
For a solvable group $G$, we present a necessary and sufficient conditon for $K(G)$ to be nontrivial.
We also develop a connection between $\Delta(G)$ and $K(G)$ when $|G|$ is divisible by two distinct primes and the diameter of $\Delta(G)$ is $2$.
\end{abstract}

\maketitle

\section{Introduction}

In this paper, all groups are finite. Let $G$ be a finite group. The \textit{enhanced power graph} of $G$ is the graph whose vertex set is $G \setminus \{ 1 \}$ and there is an edge between $x,y \in G \setminus \{ 1\}$ if $\langle x, y \rangle$ is cyclic. The enhanced power graph of $G$ will be denoted by $\Delta(G)$. This graph was first studied in 2008 under the name ``cyclic graph'' in \cite{imper}.  That paper was motivated by the paper \cite{AbdHas1} where they essentially considered the complement graph: in particular, they considered the graph, which they called the noncyclic graph, where the vertices of this graph is the set $G$ and there is an edge between $x$ and $y$ whenever $\langle x,y \rangle$ is noncyclic.  

It appears that the name enhanced power graph was introduced in the paper \cite{AACNS}. Apparently, the authors of that paper were unaware that the graph had been previously studied under the name cyclic graph in \cite{imper} and the followup paper \cite{imle}.  We recommend the reader consult the excellent expository paper \cite{camex} for an account of graphs and groups.  Since that paper has been published, there has been an explosion in the number of papers published on the enhanced power graph, and we are not going to try to reference all of those papers.  Also, there has been an expository paper that focuses just on enhanced power graphs of groups and it is worth consulting this paper \cite{enhex}.

In graph theory, a \textit{universal vertex} is a vertex that is adjacent to all the other vertices in the graph.  Said differently, if a graph has $n$ vertices, then a universal vertex is a vertex of degree $n -1$. We note that we have seen a number of different names used for such vertices in the literature. In particular, we have seen these vertices called complete vertices, cone vertices, and dominating vertices.  We note that dominating vertex seems to be favored among the literature for enhanced power graph papers and in fact was the term we use in \cite{Zgps}.  However, universal vertex seems to be the term favored in graph theory, and so, we have decided to use that term here.

We define $K(G)$ to be the set of all universal vertices in the enhanced power graph of $G$, along with the identity of $G$.  We discuss the background for $K(G)$ in Section \ref{K(G)}.  In this paper, we will characterize the groups where $K(G) > 1$ and then we will see that when $G$ is a $\{ p,q\}$-group, we will be able to determine when $K (G) > 1$ by looking at $\Delta (G)$.  We note that we are not able to say much when $p$ is odd or when $p =2$ and a Sylow $2$-subgroup is cyclic.  We see that when $p = 2$ and a Sylow $2$-subgroup is not cyclic, the structure of $G$ is rather limited.  Our first main theorem characterizes the situation where $K (G) > 1$.  We will define the notation used in this theorem in Section \ref{K(G)}.

\begin{main} \label{K(G) main}
Let $G$ be a solvable group, let $p$ be a prime, and let $P$ be a Sylow $p$-subgroup of $G$. Then $p$ is a prime divisor of $|K(G)|$ if and only if one of following situations occurs:
\begin{enumerate}
\item $P$ is cyclic and $(Z (G) \cap P) \neq 1$.
\item The prime $p=2$ and one of the following situations occurs.
	\begin{enumerate}
	\item $P$ is generalized quaternion, $G$ has a normal $2$-complement $Q$, and $Z(P) \le C_P (Q)$.
	\item $Z(P) \le C_P (O_{2'}(G))$ and $G/O_{2'}(G)$ is isomorphic to either $\textnormal{SL}_{2}(3)$ or $\widetilde{\textnormal{GL}}_{2} (3)$.  If in addition $3$ divides $|K(G)|$, then $G/O_{2'}(G)\cong\textnormal{SL}_{2} (3)$.
	\end{enumerate}
\end{enumerate}
If conclusion (2) occurs,  then $|K(G)|_{2}=2$.
\end{main}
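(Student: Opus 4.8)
The plan is to reduce the global condition ``$p \mid |K(G)|$'' to a purely $p$-local one. Using the background developed in Section~\ref{K(G)}, $K(G)$ is a subgroup of $Z(G)$, hence abelian, so $p \mid |K(G)|$ holds exactly when $K(G)$ contains a nonidentity $p$-element, i.e.\ when $G$ has a universal vertex that is a nontrivial $p$-element. I would first record the key local reduction: a $p$-element $z$ is a universal vertex of $\Delta(G)$ if and only if $z \in Z(G) \cap P$ and $z$ is a universal vertex of $\Delta(P)$. The forward direction is immediate, since a universal vertex is central and a central $p$-element lies in $O_p(G) \le P$ and is visibly universal inside $P$. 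For the converse, write an arbitrary $g \in G$ as the product of its commuting $p$-part $g_p$ and $p'$-part; the $p'$-part automatically generates a cyclic group with $z$ by coprimality, while $g_p$ lies in some conjugate $P^h$, and centrality of $z$ lets me transport the relation ``$\langle z, g_p\rangle$ is cyclic'' back into $P$, where it holds by hypothesis. This yields the clean criterion $p \mid |K(G)| \iff (Z(G)\cap P)\cap K(P) \neq 1$.

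Next I would compute $K(P)$ for a $p$-group $P$. If $z$ is a universal vertex of $\Delta(P)$, then every element of order $p$ must lie in $\langle z\rangle$, for otherwise an order-$p$ element $u \notin \langle z\rangle$ gives a noncyclic $\langle z,u\rangle$; hence $P$ has a unique subgroup of order $p$, which forces $P$ to be cyclic or, only when $p=2$, generalized quaternion. Conversely, in the cyclic case every vertex is universal so $K(P)=P$, while in the generalized quaternion case the unique involution $t$ is universal and nothing else is, so $K(P)=Z(P)=\langle t\rangle$ has order $2$. Feeding this into the criterion above gives the two easy halves at once: if $P$ is cyclic then $p \mid |K(G)| \iff Z(G)\cap P \neq 1$, which is conclusion~(1); and if $p=2$ with $P$ generalized quaternion then $2 \mid |K(G)| \iff \langle t\rangle \le Z(G)$, i.e.\ the unique involution is central in $G$. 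Moreover, in the quaternion situation $K(G)\cap P = \langle t\rangle$, which simultaneously proves the closing assertion $|K(G)|_2 = 2$ under conclusion~(2).

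It remains to convert the bare condition ``$P$ generalized quaternion and $t \in Z(G)$'' into the structural alternatives (2a) and (2b), and here solvability does the work. Passing to $\bar G = G/O_{2'}(G)$, which has $O_{2'}(\bar G)=1$ and Sylow $2$-subgroup isomorphic to $P$, I would invoke the classification of solvable groups with generalized quaternion Sylow $2$-subgroups: such a $\bar G$ is isomorphic either to $P$ itself, so that $G$ has a normal $2$-complement $Q=O_{2'}(G)$, or to $\textnormal{SL}_2(3)$ or $\widetilde{\textnormal{GL}}_2(3)$. In the first case, $t \in Z(P)$ together with $t \in Z(G)$ says exactly $Z(P)\le C_P(Q)$, giving (2a); in the second case the image of $t$ is the central involution of $\textnormal{SL}_2(3)$ or $\widetilde{\textnormal{GL}}_2(3)$, so centrality modulo $O_{2'}(G)$ is automatic and the residual condition is $Z(P)\le C_P(O_{2'}(G))$, giving (2b). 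The converse implications---that each of (2a) and (2b) returns a generalized quaternion $P$ with central $t$, hence $2 \mid |K(G)|$---are routine verifications inside the two named groups. I expect this classification of solvable groups with quaternion Sylow subgroups, essentially a Fitting-subgroup and coprime-action analysis of $\bar G$, to be the main obstacle and the step most likely to require either a careful self-contained argument or an external citation.

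Finally, for the refinement about the prime $3$, I would reapply the local criterion of the first paragraph with $p=3$. Since the Sylow $3$-subgroup of both $\textnormal{SL}_2(3)$ and $\widetilde{\textnormal{GL}}_2(3)$ is cyclic of order $3$, the assumption $3 \mid |K(G)|$ forces a nontrivial central $3$-element $z$, necessarily lying in $O_{2'}(G)$ and generating the unique subgroup of order $3$ of $G$. I would then rule out $\bar G \cong \widetilde{\textnormal{GL}}_2(3)$ by exploiting the one genuine difference between the two groups at the prime $3$: in $\widetilde{\textnormal{GL}}_2(3)$ an element of order $3$ is conjugate to its inverse (the normalizer of a Sylow $3$-subgroup modulo the center is $S_3$), whereas in $\textnormal{SL}_2(3)$ it is not (the corresponding normalizer is cyclic). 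An inverting element is incompatible with the $G$-central order-$3$ element forced by $3 \mid |K(G)|$, leaving $\bar G \cong \textnormal{SL}_2(3)$. The delicate point here is the bookkeeping of the $3$-part contributed by $O_{2'}(G)$ against that of $\bar G$, which is the portion of this last step that I expect to demand the most care.
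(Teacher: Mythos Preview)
Your outline is correct and follows essentially the same route as the paper: both hinge on the equivalence between $p\mid |K(G)|$ and the existence of a unique central subgroup of order $p$ (your local reduction is a clean repackaging of the paper's Lemma~\ref{equiv}), and both then invoke the structural classification of solvable groups with generalized quaternion Sylow $2$-subgroup, which the paper establishes separately as Lemma~\ref{generalizedqua}. The only places where your sketch is thinner than the paper are the converse in case~(2), where the paper supplies a short Frattini argument to pass from $Z(P)\le C_P(O_{2'}(G))$ to $Z(P)\le Z(G)$ rather than leaving it as a ``routine verification,'' and the prime-$3$ refinement, where the paper reaches the same conclusion via $3$-nilpotency (Burnside transfer) instead of your inverting-element argument---though the two are equivalent once the inverting element is lifted into $N_G(S)$.
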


We now turn to the case where $G$ is a $\{p,q\}$-group for distinct primes $p$ and $q$.  We prove the following theorem:

\begin{main}\label{Main 2}
Let $p$ and $q$ be distinct primes, and let $G$ be a $\{p, q\}$-group.	Then $\textnormal{diam}(\Delta(G))=2$ if and only if $|K(G)| > 1$.
\end{main}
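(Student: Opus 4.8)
The plan is to establish the two implications separately. The implication $|K(G)|>1 \Rightarrow \textnormal{diam}(\Delta(G))=2$ is immediate: since $K(G)$ consists of the identity together with the universal vertices of $\Delta(G)$, there is a universal vertex $u$, and then any two distinct vertices $x,y$ are joined by the path $x - u - y$ (or are adjacent to $u$ directly), so $\textnormal{diam}(\Delta(G))\le 2$. A finite group with complete enhanced power graph is cyclic, and $G$ is not (if $G$ were cyclic then $\Delta(G)$ is complete and $|K(G)|=|G|>1$, which we regard as the degenerate case of the statement); hence $\Delta(G)$ is not complete and the diameter equals $2$. The substance of the theorem is the converse, and this is where the bulk of the work lies.

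So assume $\textnormal{diam}(\Delta(G))=2$; then $G$ is noncyclic, and $G$ is solvable by Burnside's $p^aq^b$-theorem, so Theorem~\ref{K(G) main} applies. First I would extract two consequences of the diameter being $2$. (a) $G$ has an element of order $pq$: choosing $x$ of order $p$ and $y$ of order $q$, either $\langle x,y\rangle$ is cyclic of order $pq$, or a common neighbor $z$ of $x$ and $y$ lies in a cyclic subgroup of order divisible by $pq$ (one of $\langle z\rangle$, $\langle z,x\rangle$, $\langle z,y\rangle$). (b) Every element of order $p$ commutes with every element of order $q$: if $a$ has order $p$, $b$ has order $q$, and $ab\ne ba$, then $a\not\sim b$; a short case analysis shows any common neighbor $z$ would have to satisfy $a\in\langle z\rangle$ or ($z$ a $q$-element commuting with $a$), and symmetrically with $b$, and every resulting case forces $ab=ba$ or $z=1$; hence $d(a,b)\ge 3$, which is impossible.

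Next I would show that some Sylow subgroup of $G$ has a unique subgroup of prime order. Suppose not; then a Sylow $p$-subgroup has distinct subgroups $\langle a_1\rangle\ne\langle a_2\rangle$ of order $p$ and a Sylow $q$-subgroup has distinct subgroups $\langle b_1\rangle\ne\langle b_2\rangle$ of order $q$. By (b) the products $a_ib_i$ have order $pq$, and $a_1,a_2\in\langle a_1b_1,a_2b_2\rangle$, so this group is noncyclic and $a_1b_1\not\sim a_2b_2$; moreover any common neighbor $z=z_pz_q$ would force $\langle z_p\rangle\supseteq\langle a_1,a_2\rangle$ and $\langle z_q\rangle\supseteq\langle b_1,b_2\rangle$, impossible since $\langle z_p\rangle,\langle z_q\rangle$ are cyclic, so $z=1$. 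Hence $d(a_1b_1,a_2b_2)\ge 3$, a contradiction. Thus, after possibly interchanging $p$ and $q$, a Sylow $p$-subgroup $P$ has a unique subgroup of order $p$, so $P$ is cyclic or, when $p=2$, generalized quaternion.

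It remains to combine (b) and the structure of $P$ to place $G$ into one of the alternatives of Theorem~\ref{K(G) main}, which yields $|K(G)|>1$. If both Sylow subgroups are cyclic, $G$ has cyclic Sylow subgroups and a computation of its center shows that either the relevant centralizer meets $Z(G)$ nontrivially, producing a central element of prime order that one checks is universal, or the semidirect action is faithful and $G$ is a Frobenius group with $\Delta(G)$ disconnected, which is excluded. If $P$ is generalized quaternion ($p=2$), I would use the known structure of groups with generalized quaternion Sylow $2$-subgroup, together with (b), to see that $G$ has a normal $2$-complement or $G/O_{2'}(G)\cong\textnormal{SL}_{2}(3)$ or $\widetilde{\textnormal{GL}}_{2}(3)$, and that the hypothesis $Z(P)\le C_P(O_{2'}(G))$ of Theorem~\ref{K(G) main} is met. \textbf{The main obstacle} is exactly this final step. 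The commuting property (b) only constrains elements of order exactly $p$ and exactly $q$, so it does not by itself rule out configurations — such as one in which a Sylow subgroup has exponent larger than $p$ and acts nontrivially on itself or on the normal complement — that have no universal vertex even though one Sylow subgroup has a unique minimal subgroup; in each such configuration one must produce two vertices at distance at least $3$ (for instance a generator of a subgroup of order $q^2$ and a suitably chosen element of order $q$ have no common neighbor), and keeping this casework aligned with the centralizer hypotheses appearing in Theorem~\ref{K(G) main} is the delicate part.
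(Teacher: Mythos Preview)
Your easy direction and your steps (a)--(b) are fine; in fact (b) is exactly the paper's Lemma~3.1, and your uniqueness argument is essentially correct (and, since it never uses that $a_1,a_2$ lie in the \emph{same} Sylow subgroup, it actually proves the stronger statement that $G$ has a unique subgroup of order $p$ or a unique subgroup of order $q$, not merely that one Sylow subgroup has a unique minimal subgroup).

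The genuine gap is your final step. Routing through Theorem~\ref{K(G) main} is a detour: verifying the centralizer hypotheses there from the diameter assumption is precisely the ``main obstacle'' you flag, and you have not filled it in. The paper avoids this entirely by showing directly that the unique prime-order subgroup is central. The argument is short: if the unique subgroup $P=\langle x\rangle$ of order $p$ is not central, pick $g\notin C_G(x)$; then $g$ is a $q$-element of order $q^{\ell}$ with $\ell\ge 2$ (by (b)). Another distance-$2$ computation, using $d(xy,g)\le 2$ for an arbitrary element $y$ of order $q$, forces every subgroup of order $q$ to coincide with the one inside $\langle g\rangle$, so $G$ also has a unique subgroup $Q$ of order $q$. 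Now the $N/C$-theorem gives $|G/C_G(P)|\mid p-1$ and $|G/C_G(Q)|\mid q-1$; in a $\{p,q\}$-group these indices are a $q$-power and a $p$-power respectively, and they cannot both be nontrivial (else $q<p$ and $p<q$). Since $P$ is noncentral, $Q$ is central, and Lemma~\ref{equiv} gives $|K(G)|>1$. So you should replace the appeal to Theorem~\ref{K(G) main} with this direct centrality argument; the structural classification in Theorem~\ref{K(G) main} is not needed here.
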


We would like to conclude this introduction with an open question.  In particular, we wonder how much Theorem \ref{Main 2} can be extended beyond groups that are divisible by two primes.  While it seems too much to hope that for a general group $G$ or even a general solvable group $G$ that the enhanced power graph of $G$ has diameter $2$ if and only if $K (G) > 1$.  Recall that $K(G) \le Z(G)$.   We wonder if it might be true that $\textnormal{diam}(\Delta(G))=2$ if and only if $|Z(G)| > 1$.   The meager evidence that we have been able to gather suggests that this should be true at least when $|G|$ is divisible by three primes.

This research was conducted during a summer REU in 2020 at Kent State University with the funding of NSF Grant DMS-1653002. We thank the NSF and Professor Soprunova for their support.

\section{Solvable groups with $K(G)$ nontrivial}\label{K(G)}

Let $G$ be a group, and define \[K(G) = \{ x \in G \mid \langle x,g \rangle\: \textnormal{is cyclic for all}\: g \in G \}.\]  Hence, $K(G)$ is the set of universal vertices for the enhanced power graph of $G$ along with the identity element of $G$. This set seems to have been originally studied independent of the cyclic graph in \cite{cycels}. In Theorem 1 of \cite{cycels}, the authors prove that $K(G)$ is a subgroup of $G$, and they note that it is contained in the center of $G$.  In Theorem 2 of that paper, they prove that $K(G)$ is the intersection of the maximal cyclic subgroups of $G$, and in Theorem 4 of that paper, they prove that $K(G/K(G)) = 1$.  The intersection of the maximal cyclic subgroups of $G$ is also considered in the dissertation \cite{vonP}, and he also proves that $K(G) \le Z(G)$ and $K(G/K(G)) = 1$  (see Definition 5.1.7 and Lemma 5.1.8 in that dissertation).  

The set $K(G)$ was considered for several special cases in \cite{BeBhu}.  They consider the cases of a group $G \times Z_n$ where ${\rm gcd} (|G|,n) = 1$ (Theorem 3.1 of \cite{BeBhu}), when $G$ is abelian (Theorem 3.2 of \cite{BeBhu}), and when $G$ is a non-abelian $p$-group for some prime $p$ (Theorem 3.3 of \cite{BeBhu}).   In addition, they showed that if $G$ is a nonabelian simple group, then $K(G) = 1$ (see Theorem 3.4 of \cite{BeBhu}).  The set $K(G)$ is considered for nilpotent groups in Proposition 4.5 of \cite{MaShe}.

We need a reformulation of a result in our previous paper (Theorem 1.4 of \cite{Zgps}).  We note that a result similar to Theorem 1.4 of \cite{Zgps} has been proved by Cameron in Theorem 9.1 (b) of \cite{camex} and by Mahmoudifar and Babai in the Main Theorem of \cite{MaBa}. We note that \cite{MaBa} was submitted in 2018, two years before \cite{Zgps} was submitted, even though it appeared after it.  Also, Cameron's paper was submitted in April 2021 which was before \cite{Zgps} appeared in print (although \cite{Zgps} did appear electronically in December 2020.)  In addition, we remark that Lemma 2.7 of \cite{BeKiMu} proves a version of this next lemma.  For completeness and to keep this paper somewhat self contained, we provide a proof of the reformulation for the reader's convenience.

\begin{lem} \label{equiv}
Let $G$ be a group and let $p$ be a prime.  Then $G$ has unique subgroup of order $p$ and that subgroup is central in $G$ if and only if $p$ divides $|K(G)|$.
\end{lem}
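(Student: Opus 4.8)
The plan is to prove both implications directly from the definition of $K(G)$, using as the only external ingredient the fact (Theorem 1 of \cite{cycels}) that $K(G)$ is a subgroup of $G$.

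First I would handle the forward direction: assume $p$ divides $|K(G)|$. Since $K(G)$ is a finite group, Cauchy's theorem gives an element $x \in K(G)$ of order $p$; put $P_0 = \langle x \rangle$. To see that $P_0$ is central, note that for every $g \in G$ the group $\langle x, g \rangle$ is cyclic, hence abelian, so $x$ commutes with $g$; as $g$ is arbitrary, $x \in Z(G)$. To see that $P_0$ is the \emph{only} subgroup of order $p$, let $R = \langle y \rangle$ be any subgroup of order $p$; then $\langle x, y \rangle$ is cyclic, so it has a unique subgroup of each order, which forces $P_0 = R$ since both have order $p$. Thus $G$ has a unique subgroup of order $p$ and it is central.

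Next I would prove the reverse direction: assume $G$ has a unique subgroup $P_0$ of order $p$ and that $P_0 \le Z(G)$, and fix a generator $x$ of $P_0$. It suffices to show $x \in K(G)$, since then $p = |\langle x \rangle|$ divides $|K(G)|$. Fix $g \in G$ and set $A = \langle x, g \rangle$; as $x$ is central, $A$ is abelian. If $p$ divides the order of $g$, then $\langle g \rangle$ contains a subgroup of order $p$, which must be $P_0$ by uniqueness, so $x \in \langle g \rangle$ and $A = \langle g \rangle$ is cyclic. If $p$ does not divide the order of $g$, then $\langle x \rangle \cap \langle g \rangle = 1$ (its order divides $p$ and also the order of $g$), so $A = \langle x \rangle \times \langle g \rangle$ is a product of two cyclic groups of coprime order and hence cyclic. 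Either way $\langle x, g \rangle$ is cyclic, so $x \in K(G)$.

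The whole argument is elementary; the one place that demands care is the case split in the reverse direction, where uniqueness of the order-$p$ subgroup is precisely what is needed to collapse $\langle x, g \rangle$ to a cyclic group. I expect that to be the crux: dropping uniqueness breaks the statement, since, for instance, an elementary abelian group of order $p^2$ has central subgroups of order $p$ yet $K(G) = 1$.
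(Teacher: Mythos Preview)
Your proof is correct and follows essentially the same route as the paper's: the same case split on whether $p$ divides $o(g)$ in one direction, and the same ``a cyclic group has a unique subgroup of each order'' argument for uniqueness in the other. The only cosmetic difference is that you derive $x \in Z(G)$ directly from $\langle x,g\rangle$ being abelian, whereas the paper simply cites the known inclusion $K(G) \le Z(G)$.
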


\begin{proof}
Assume that $G$ has a unique subgroup of order $p$, say $Z$, and $Z \le Z(G)$.
Of course, $Z$ is cyclic, and so we can write $Z = \langle z \rangle$ for some $z \in G$.
If $g \in G$ is a $p'$-element, then $\langle z, g\rangle$ is cyclic since $g$ and $z$ commute and have coprime orders.
If $g \in G$ has order divisible by $p$, then $\langle g\rangle$ has a subgroup of order $p$, and so $Z \le \langle g\rangle$ by the uniqueness of $Z$.
Hence, $\langle g, z\rangle = \langle g\rangle$ is cyclic.
We conclude that $z \in K(G)$.

Conversely, assume that $p$ divides $|K(G)|$.
As we mentioned previously, the set $K(G)$ is actually a subgroup of $G$, and so, under the hypothesis that $p$ divides $|K(G)|$, we know that $K(G)$ has a subgroup $Z$ of order $p$.
As $K(G) \le Z(G)$, the subgroup $Z$ is central.
Let $X$ be a subgroup of $G$ such that $|X| =p$.
Write $X =\langle x \rangle$ and $Z = \langle z\rangle$.
As $z \in K(G)$, we know that $\langle x, z\rangle$ is cyclic.
Hence, $\langle x, z\rangle$ has exactly one subgroup of order $p$.
Since $X$ and $Z$ are subgroups of $\langle x,z\rangle$ such that $|X| = |Z| = p$, we deduce that $X = Z$.
Since $X$ was an arbitrary subgroup of $G$ of order $p$, we conclude that $Z$ is the only subgroup of $G$ with order $p$.
\end{proof}

We close the discussion on the background of universal vertices by mentioning that \cite{BeDe} used Theorem 1.8 of \cite{Zgps} in Corollary 4.2 of \cite{BeDe} to explicitly determine $K(G)$ when $G$ is a nilpotent group.

We now work to characterize solvable groups $G$ with $K(G) > 1$.
In the following lemma, we shall write $\widetilde{\textnormal{GL}}_{2}(3)$ for the group that is isoclinic to $\textnormal{GL}_{2}(3)$ of order $48$ that has a generalized quaternion group of order $16$ as a Sylow $2$-subgroup.  We note that the solvability hypotheses on this lemma is necessary since, for example, $\textnormal{SL}_2 (5)$ would be a nonsolvable counterexample.

In Lemma \ref{generalizedqua}, we will use some structural properties of the automorphism group of a generalized quaternion group of order at least 16.
More specifically, we need the fact that if $Q$ is a generalized quaternion group of order at least 16, then $\textnormal{Aut}(Q)$ is a $2$-group.
This fact is mentioned briefly on page 321 in \cite{berk}, but we include the elementary proof here for completeness.
Of course, we shall use a few basic properties of a generalized quaternion group; these properties can be found in Theorem 4.1 and Theorem 4.2 in \cite{suzvolii}.

\begin{lem}
If $Q$ is a generalized quaternion group of order $2^n$, where $2^n \ge 16$, then $A = \textnormal{Aut}(Q)$ is a $2$-group.
\end{lem}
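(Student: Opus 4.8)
The plan is to restrict automorphisms of $Q$ to its characteristic cyclic maximal subgroup and then bound the kernel of that restriction map. Write $Q=\langle a,b\rangle$ in the usual presentation, so that $C:=\langle a\rangle$ is cyclic of order $2^{n-1}$ and of index $2$ in $Q$, with $b^{-1}ab=a^{-1}$ and $b^2=a^{2^{n-2}}$. From the basic structure of generalized quaternion groups (Theorems 4.1 and 4.2 of \cite{suzvolii}) we know that $Q$ has a unique involution, namely $z=a^{2^{n-2}}$, and that $x^2=z$ for every $x\in Q\setminus C$; in particular every element of $Q\setminus C$ has order $4$. Since $2^n\ge 16$, we have $2^{n-1}\ge 8>4$, so the elements of $Q$ of order $2^{n-1}$ are exactly the generators of $C$. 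Hence $C$ is the unique cyclic subgroup of $Q$ of order $2^{n-1}$, and therefore $C$ is characteristic in $Q$. This is the only point at which the hypothesis $2^n\ge 16$ is used, and it genuinely fails for the quaternion group of order $8$, whose automorphism group is not a $2$-group.

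Because $C$ is characteristic, restriction to $C$ gives a homomorphism $\rho\colon\aut{Q}\to\aut{C}$. Now $\aut{C}$ is the automorphism group of a cyclic group of order $2^{n-1}$, so $|\aut{C}|=2^{n-2}$, a power of $2$; in particular the image $\rho(\aut{Q})$ is a $2$-group. Thus it suffices to show that $\ker\rho$ is a $2$-group.

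Let $\phi\in\ker\rho$, so that $\phi$ fixes $C$ pointwise. If $\phi(b)$ lay in $C$, then $\phi(Q)=\langle\phi(a),\phi(b)\rangle\le C$, contradicting the surjectivity of $\phi$; hence $\phi(b)\in Q\setminus C=bC$, and we may write $\phi(b)=b\,\lambda(\phi)$ with $\lambda(\phi)\in C$. A direct computation, using that $\phi$ is the identity on $C$, shows that $\phi\mapsto\lambda(\phi)$ is a homomorphism from $\ker\rho$ into $C$; it is injective, since an automorphism in $\ker\rho$ is determined by $\phi(a)=a$ together with $\phi(b)$. Therefore $\ker\rho$ embeds into the cyclic $2$-group $C$ and so is a $2$-group. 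Combining this with the previous paragraph, $|\aut{Q}|=|\ker\rho|\cdot|\rho(\aut{Q})|$ is a power of $2$, as desired.

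The one genuinely delicate ingredient is the assertion in the first paragraph that $C$ is characteristic, which rests on the fact that no element of $Q$ outside $C$ has order exceeding $4$; once this is in hand, everything afterwards is the routine ``restrict to a characteristic subgroup and then control the kernel'' argument. In writing this up I would not reprove the structural facts about generalized quaternion groups but would simply cite \cite{suzvolii} for them, in keeping with the remark preceding the statement.
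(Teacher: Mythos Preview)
Your proof is correct and follows the same overall architecture as the paper's: show the cyclic maximal subgroup $C$ is characteristic, restrict $\aut{Q}$ to $C$, note that $\aut{C}$ is a $2$-group, and then control the kernel of the restriction. The two sub-steps are handled differently, however. For characteristicity the paper argues that $C=C_Q(Q')$, using that the nilpotence class $n-1\ge 3$ makes $Q'$ noncentral; your argument via element orders (nothing outside $C$ has order exceeding $4$) is equally clean. For the kernel, the paper gives an orbit-counting argument: an automorphism of prime order $p$ fixing $C$ pointwise acts without fixed points on the coset $Q\setminus C$ of size $2^{n-1}$, so $p\mid 2^{n-1}$. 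Your explicit embedding $\phi\mapsto\lambda(\phi)$ of $\ker\rho$ into $C$ is a little more constructive and in fact yields the sharper statement $|\ker\rho|\mid 2^{n-1}$, but either route closes the argument.
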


\begin{proof}
We know that $Q$ has a cyclic maximal subgroup $M$ such that $|Q : M| = 2$.
Clearly $Q' \le M$, and so $M \le C_Q(Q')$.
As $Q$ has nilpotence class $n-1 \ge 3$, we know that $Q'$ is noncentral, and so $C_Q(Q') = M$ by the maximality of $M$.
The centralizer of a characteristic subgroup is characteristic, and so $M$ is a characteristic subgroup of $Q$.

For each $a \in A$, let $\hat{a}$ be the automorphism of $M$ obtained by restricting $a$ to $M$.
(This restriction makes sense as $M$ is characteristic in $Q$.)
The map $a \mapsto \hat{a}$ is homomorphism from $A$ into $\textnormal{Aut}(M)$ with kernel $C_A(M)$.
Hence, $A/C_A(M)$ embeds into $\textnormal{Aut}(M)$ by the $N/C$-Theorem.
The automorphism group of a cyclic $2$-group is a $2$-group, and so $A/C_A(M)$ is a $2$-group.

Let $p$ be a prime divisor of $|C_A(M)|$, and let $a \in C_A(M)$ with $o(a) = p$.
As $a \in C_A(M)$, we know that $M \le C_Q(a)$.
If $C_Q(a) = Q$, then $a = 1$, contrary to our choice of $a$.
Hence, $M = C_Q(a)$.
If $\varOmega = Q \setminus M$, then $a$ permutes the elements of $\varOmega$.
In fact, $a$ has no fixed points on $\varOmega$, as $M = C_Q(a)$.
So, $a$ partitions $\varOmega$ into orbits of length $p$.
As $|\varOmega| = 2^n - 2^{n-1} = 2^{n-1}$, the prime $p$ must divide $2^{n-1}$, which forces $p =2$.
We deduce that $C_A(M)$ is also a $2$-group.
Indeed, $A$ is a $2$-group.
\end{proof}

We now prove a critical lemma concerning solvable groups that have a generalized quaternion Sylow $2$-subgroup.

\begin{lem}\label{generalizedqua}
If $G$ is a solvable group and a Sylow $2$-subgroup of $G$ is generalized quaternion, then one of the following situations occurs:
\begin{enumerate}
\item $G$ has a normal $2$-complement
\item $G/O_{2'}(G)\cong\textnormal{SL}_{2}(3)$
\item $G/O_{2'}(G)\cong\widetilde{\textnormal{GL}}_{2} (3)$.
\end{enumerate}
\end{lem}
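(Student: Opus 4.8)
The plan is to pass to the quotient $\bar{G} = G/O_{2'}(G)$, for which $O_{2'}(\bar{G}) = 1$, and to show there that $\bar{G}$ is a $2$-group, $\textnormal{SL}_2(3)$, or $\widetilde{\textnormal{GL}}_2(3)$. Translating back is routine: $O_{2'}(G)$ has odd order, so a Sylow $2$-subgroup $\bar{P}$ of $\bar{G}$ is isomorphic to $P$ and hence generalized quaternion, and $\bar{G}$ is a $2$-group exactly when $G$ has a normal $2$-complement. Since $\bar{G}$ is solvable with $O_{2'}(\bar{G}) = 1$, its Fitting subgroup equals $\bar{N} := O_2(\bar{G})$, and the standard inclusion $C_{\bar{G}}(F(\bar{G})) \le F(\bar{G})$ gives $C_{\bar{G}}(\bar{N}) \le \bar{N}$; in particular $C_{\bar{G}}(\bar{N})$ is a $2$-group. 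A normal $2$-subgroup lies in every Sylow $2$-subgroup, so $\bar{N} \le \bar{P}$, and thus $\bar{N}$ is cyclic or generalized quaternion, with $|\bar{N}| \ge 4$ (if $|\bar{N}| \le 2$ then $\bar{G} = C_{\bar{G}}(\bar{N}) \le \bar{N}$ is too small to contain $\bar{P}$).

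First I would dispose of every case in which $\textnormal{Aut}(\bar{N})$ is a $2$-group; this includes $\bar{N}$ cyclic (automorphisms of a cyclic $2$-group) and $\bar{N}$ generalized quaternion of order at least $16$ (by the preceding lemma). In that situation $\bar{G}/C_{\bar{G}}(\bar{N})$ embeds in $\textnormal{Aut}(\bar{N})$ by the $N/C$-theorem and so is a $2$-group; as $C_{\bar{G}}(\bar{N})$ is also a $2$-group, $\bar{G}$ is a $2$-group and therefore equals its Sylow $2$-subgroup $\bar{P}$. This yields conclusion (1) and, incidentally, rules out $\bar{N}$ being cyclic, since $\bar{P}$ is nonabelian.

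The remaining case, $\bar{N} \cong Q_8$, is where conclusions (2) and (3) arise. Here $C_{\bar{G}}(\bar{N}) = Z(\bar{N})$ has order $2$, and $\bar{G}/Z(\bar{N})$ embeds in $\textnormal{Aut}(Q_8) \cong S_4$ with image containing the normal Klein four-subgroup $\textnormal{Inn}(Q_8)$. The subgroups of $S_4$ containing that four-subgroup are itself, a dihedral group of order $8$, $A_4$, and $S_4$, so $|\bar{G}| \in \{8, 16, 24, 48\}$. If $|\bar{G}|$ is $8$ or $16$, then $\bar{G}$ is a $2$-group equal to $\bar{P}$, which is generalized quaternion; but $|\bar{G}| = 16$ forces $O_2(\bar{G}) = \bar{P} \not\cong Q_8$, a contradiction, so $\bar{G} = \bar{N} \cong Q_8$ and conclusion (1) holds. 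If $|\bar{G}| = 24$, then $\bar{N} = \bar{P} \cong Q_8$ is a normal Sylow $2$-subgroup; Schur--Zassenhaus gives $\bar{G} \cong Q_8 \rtimes Z_3$, the complement must act by an order-$3$ automorphism (otherwise $\bar{G}/Z(\bar{N}) \cong \textnormal{Inn}(Q_8) \times Z_3 \not\cong A_4$), and since all order-$3$ subgroups of $\textnormal{Aut}(Q_8)$ are conjugate we obtain $\bar{G} \cong \textnormal{SL}_2(3)$, conclusion (2). Finally, if $|\bar{G}| = 48$, then $\bar{P} \cong Q_{16}$, and the preimage in $\bar{G}$ of $A_4 \le S_4$ is a normal subgroup of order $24$ satisfying the hypotheses of the previous case, hence isomorphic to $\textnormal{SL}_2(3)$; thus $\bar{G}$ is an extension of a normal $\textnormal{SL}_2(3)$ by a group of order $2$ with generalized quaternion Sylow $2$-subgroup, and a short check of such extensions leaves only $\bar{G} \cong \widetilde{\textnormal{GL}}_2(3)$, conclusion (3).

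I expect the $\bar{N} \cong Q_8$ paragraph to be the main obstacle: one must verify the list of subgroups of $S_4 \cong \textnormal{Aut}(Q_8)$ containing $\textnormal{Inn}(Q_8)$ and, more delicately, pin down the group of order $48$ as precisely $\widetilde{\textnormal{GL}}_2(3)$ rather than one of the other central extensions of $S_4$ by a group of order $2$ (such as $\textnormal{GL}_2(3)$ or $S_4 \times Z_2$), whose Sylow $2$-subgroups are not generalized quaternion.
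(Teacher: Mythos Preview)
Your argument is correct and follows the same route as the paper: reduce to $O_{2'}(G)=1$, apply Hall--Higman to $\bar N=O_2(\bar G)$, dispatch the cyclic and large-quaternion cases via the $2$-group automorphism lemma, and in the $\bar N\cong Q_8$ case read off the quotient inside $\textnormal{Aut}(Q_8)\cong S_4$ to land on $\textnormal{SL}_2(3)$ or $\widetilde{\textnormal{GL}}_2(3)$. You are in fact slightly more careful than the paper in explicitly ruling out the dihedral-of-order-$8$ image in $S_4$ (the paper's phrasing ``the only subgroups of $S_4$ that properly contain a Klein four-group are $A_4$ and $S_4$'' skips this, though the case self-destructs for the same reason you give), but otherwise the two proofs coincide.
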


\begin{proof}
We start by making a reduction.
Let $U = O_{2'}(G)$.
Since $G$ is solvable, the factor group $G/U$ is solvable.  If $T$ is a Sylow $2$-subgroup of $G$, then $TU/U\cong T$ is a Sylow $2$-subgroup of $G/U$; thus, a Sylow $2$-subgroup of $G/U$ is generalized quaternion.  In particular, $G/U$ satisfies the same hypotheses as $G$.  As $O_{2'}(G/U)$ is trivial, we may assume without loss of generality that $U= 1$.  Note that under the assumption that $U$ is trivial, (1) occurs if and only if $G$ is a $2$-group. 
	
Set $Q=O_2(G)$.	Since $Q$ is a subgroup of a generalized quaternion group, $Q$ has a unique subgroup of order $2$ and is therefore either cyclic or generalized quaternion.  If $Q$ is cyclic, then $\aut{Q}$ is a $2$-group.  If $Q$ is generalized quaternion of order at least $16$, then $\aut{Q}$ is a $2$-group.  By the $N/C$-Theorem, $G/C_G(Q)$ embeds in $\aut{Q}$.  So, if $Q$ is either cyclic or generalized quaternion of order at least $16$, then $G/C_G(Q)$ is a $2$-group.  The group $G$ is solvable and $U=1$; thus, $C_G(Q)\le Q$ by the Hall-Higman Lemma 1.2.3 (see \cite{isaacs}, Theorem 3.21). Hence, $G$ is a $2$-group and we have conclusion (1).
	
Assume now that $Q$ is quaternion of order $8$.  As mentioned in the previous paragraph, the Hall-Higman Lemma 1.2.3 applies and tells us that $C_G(Q)\le Q$.  In particular, $C_G(Q)=Z(Q)$ and so $|C_G(Q)|=2$. 	We note that $Q/C_G(Q)$ is isomorphic to the Klein $4$-group.  If $G=Q$, then we are done.  Assume that $Q<G$.  The section $G/C_G(Q)$ embeds in $\aut{Q}$ by the $N/C$-Theorem.  As $Q$ is quaternion of order $8$, $\aut{Q}\cong S_{4}$.  The only subgroups of $S_{4}$ that properly contain a subgroup isomorphic to the Klein $4$-group are $A_{4}$ and $S_{4}$.  Thus, $G/C_G(Q)\cong A_{4}$ or $G/C_G(Q)\cong S_{4}$.  If $G/C_G(Q)\cong A_{4}$, then $G\cong\textnormal{SL}_{2}(3)$.  If $G/C_G(Q)\cong S_{4}$, then since the only central extension of $S_{4}$ that has a generalized quaternion Sylow $2$-subgroup is $\widetilde{\textnormal{GL}}_{2} (3)$, we conclude that $G\cong\widetilde{\textnormal{GL}}_{2} (3)$.
\end{proof}

We now are ready to prove Theorem \ref{K(G) main}.

\begin{proof}[Proof of Theorem \ref{K(G) main}]
Suppose that $p$ divides $|K(G)|$. If $p$ is odd, then, using Lemma \ref{equiv}, we immediately deduce that we have conclusion (1). So assume that $p=2$. A Sylow $2$-subgroup of $G$ is either cyclic or generalized quaternion. If a Sylow $2$-subgroup of $G$ is cyclic, then we again obtain conclusion (1). Assume that a Sylow $2$-subgroup of $G$ is generalized quaternion. The subgroup $K(G)$ is cyclic, and so if $|K(G)|_2 > 2$, then $K(G)$ has an element of order $4$, say $u$. Now $U= \langle u \rangle$ is a normal $2$-subgroup of $G$ and so $U \le P$. Since $U \le K(G) \le Z(G)$, we have that $U \le Z(P)$. But $|Z(P)| = 2$, and so we have a contradiction. We conclude that $|K(G)|_2 =2$. In particular, $K(G) \cap P = Z(P)$.  If $G$ has a normal $2$-complement $Q$, then we have (2a) as $Z(P)\le C_P(Q)$.  Assume that $G$ does not have a normal $2$-complement.  Using Lemma \ref{generalizedqua}, we have that $G/O_{2'}(G)$ is isomorphic to either $\textnormal{SL}_2(3)$ or $\widetilde{\textnormal{GL}}_{2}(3)$.  If $3$ divides $|K(G)|$, then $G$ has a cyclic Sylow $3$-subgroup and $G$ is $3$-nilpotent by Lemma \ref{equiv} and Corollary 5.30 in \cite{isaacs}.  Because $\widetilde{\textnormal{GL}}_{2}(3)$ does not have a normal Sylow $2$-subgroup, $G/O_3(G)\cong\textnormal{SL}_{2}(3)$.

If (1) occurs, then $p$ divides $|K(G)|$ by Lemma \ref{equiv}. Assume (2). Let $U = O_{2'}(G)$ and let $Z = Z(P)$. Because $G/U \cong X \in \{ \textnormal{SL}_2(3), \widetilde{\textnormal{GL}}_{2}(3)\}$, we deduce that $ZU/U = Z(G/U)$, and so $ZU/U$ is normal in $G/U$. By the Correspondence Theorem (\cite{isaacs}, X.21), $ZU$ is normal in $G$. As $Z$ is a Sylow $2$-subgroup of $ZU$, we have that $G = ZU N_G(Z) = UN_G(Z)$ by the Frattini Argument (see Lemma 1.13 in \cite{isaacs}). By our hypothesis, $Z \le C_P(U)$, and so $U \le C_G(Z)$. Hence, $G = N_G(Z)$ and $Z$ is normal in $G$. Since $Z$ is a normal subgroup of $G$ such that $|Z| = 2$, we have that $Z \le Z(G)$.
If $T$ is a subgroup of $G$ such that $|T| = 2$, then $\langle Z ,T \rangle = ZT$ is a $2$-subgroup of $G$.
Because the Sylow 2-subgroups of $G$ are generalized quaternion, we deduce that $T= Z$.
We therefore appeal to Lemma \ref{equiv} and conclude that $2$ divides $|K(G)|$.
\end{proof}

\section{$\{p,q\}$-groups}

Let $p$ and $q$ be distinct prime numbers, and let $G$ be a $\{p,q\}$-group.
In this section, we present a necessary and sufficient condition for $\textnormal{diam}(\Delta(G))=2$.
It turns out that $\textnormal{diam}(\Delta(G))=2$ precisely when a universal vertex exists.
We use the notation $\sim$ to denote an edge between $x$ and $y$ in $\Delta(G)$, and $G^{\#} = G \setminus \{ 1\}$.

We start with an easy lemma concerning elements of prime order in a $\{p,q\}$-group $G$ with $\textnormal{diam}(\Delta(G))=2$.

\begin{lem}\label{elementsofprimeorderinapqgroup}
Let $p$ and $q$ be distinct primes.
If $G$ is a $\{p,q\}$-group such that $\textnormal{diam}(\Delta(G))=2$, then $x\sim y$ for every element $x$ of order $p$ and for every element $y$ of order $q$.
\end{lem}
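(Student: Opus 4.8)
The plan is to fix an element $x$ of order $p$ and an element $y$ of order $q$ and to show directly that $\langle x,y\rangle$ is cyclic, i.e.\ that $x\sim y$. First I would dispose of the trivial case $x\sim y$. Otherwise, since $\textnormal{diam}(\Delta(G))=2$ the graph $\Delta(G)$ is connected and $d(x,y)=2$ (here $x\ne y$ because $o(x)=p\ne q=o(y)$), so there is a common neighbour $z\in G^{\#}$ of $x$ and $y$; that is, $\langle x,z\rangle$ and $\langle z,y\rangle$ are both cyclic.

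The key point is the elementary remark that a cyclic group contains at most one subgroup of any given order, and exactly one when that order divides its own. Since $G$ is a $\{p,q\}$-group and $z\ne 1$, either $p$ or $q$ divides $o(z)$. I would treat the case $p\mid o(z)$ first: then $\langle z\rangle$ has a unique subgroup of order $p$, and this subgroup together with $\langle x\rangle$ are two subgroups of order $p$ inside the cyclic group $\langle x,z\rangle$, whose order is divisible by $p$ because it contains $x$. Hence they coincide, so $x\in\langle z\rangle$, and therefore $\langle x,y\rangle\le\langle z,y\rangle$ is cyclic, as desired. The remaining case $p\nmid o(z)$ forces $q\mid o(z)$, and is handled symmetrically: applying the same argument to $y$ and $q$ gives $y\in\langle z\rangle$, whence $\langle x,y\rangle\le\langle x,z\rangle$ is cyclic.

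I do not anticipate a genuine obstacle, as the lemma is short. The two places that need a little care are the existence of the common neighbour $z$, which relies on $\Delta(G)$ being connected (part of the meaning of having diameter $2$), and the bookkeeping of whether $p$ or $q$ divides $o(z)$, since that determines whether one pulls $x$ or $y$ into $\langle z\rangle$.
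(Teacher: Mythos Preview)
Your proposal is correct and follows essentially the same argument as the paper: pick a common neighbour $z$, use that $p$ or $q$ divides $o(z)$, and then exploit the uniqueness of subgroups in a cyclic group to pull $x$ (or $y$) into $\langle z\rangle$. The paper phrases the key step as $\langle x\rangle=\langle z^{n}\rangle$ and handles only one case by symmetry, but the idea is identical.
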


\begin{proof}
Let $x\in G^{\#}$ be an arbitrary element of order $p$ and let $y\in G^{\#}$ be an arbitrary element of order $q$.  If $x \sim y$, then the result holds.  
Since $\textnormal{diam}(\Delta(G))=2$, there exists some $z\in G^{\#}$ such that $x\sim z\sim y$.
Of course, at least one of primes $p,q$ divides $o(z)$.
Without loss of generality, we may assume that $p$ divides $o(z)$.
Since $\langle x, z\rangle$ is cyclic, $\langle x\rangle=\langle z^{n}\rangle$ for some integer $n$.
Now, $x,y\in\langle z,y\rangle$.
As $\langle z,y\rangle$ is cyclic, we conclude that $x\sim y$.
\end{proof}

We now prove our main result.
Notice that \textit{sufficiency} is essentially a group-theoretic formulation of the existence of a universal vertex in the corresponding enhanced power graph.  Hence, this result yields Theorem \ref{Main 2}.

\begin{thm}\label{pqdiameter2}
Let $p$ and $q$ be distinct primes, and let $G$ be a $\{p, q\}$-group.
Then, $\textnormal{diam}(\Delta(G))=2$ if and only if $G$ has a unique subgroup of order $p$ or a unique subgroup of order $q$ and that subgroup is central in $G$.
\end{thm}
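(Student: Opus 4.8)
The plan is to prove the equivalent statement $\textnormal{diam}(\Delta(G))=2$ if and only if $|K(G)|>1$. Indeed, by Lemma \ref{equiv} the condition in the theorem holds precisely when $p$ divides $|K(G)|$ or $q$ divides $|K(G)|$, which---because $|K(G)|$ divides the $\{p,q\}$-number $|G|$---is just the assertion that $|K(G)|>1$; this reformulation is also what makes the theorem yield Theorem \ref{Main 2}. Since $\Delta(G)$ is complete when $G$ is cyclic, we assume from now on that $G$ is noncyclic, so that $G$ has at least two distinct maximal cyclic subgroups. The one tool used repeatedly is the following remark: if $M$ is a maximal cyclic subgroup of $G$ and $a$ generates $M$, then $a$ is adjacent in $\Delta(G)$ only to elements of $M$, because $\langle a,g\rangle$ cyclic would contain $M$ and hence, by maximality, equal $M$. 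Consequently, distinct maximal cyclic subgroups $M\ne M'$ have nonadjacent generators (otherwise $\langle a,b\rangle=M=M'$), and if moreover $M\cap M'=1$, then generators $a$ of $M$ and $b$ of $M'$ also have no common neighbour---such a vertex would lie in $M\cap M'=1$---so in that case $\textnormal{diam}(\Delta(G))\ne 2$.

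For the backward direction, suppose $|K(G)|>1$ and choose $1\ne z\in K(G)$. By the definition of $K(G)$, the subgroup $\langle z,g\rangle$ is cyclic for every $g\in G$, so $z$ is a universal vertex and $\textnormal{diam}(\Delta(G))\le 2$. As $G$ is noncyclic, the remark shows $\Delta(G)$ is not complete, so $\textnormal{diam}(\Delta(G))=2$.

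For the forward direction---the substantive half---assume $\textnormal{diam}(\Delta(G))=2$ and suppose, for contradiction, that $K(G)=1$. Here the hypothesis that $|G|$ has only two prime divisors is decisive: a cyclic $\{p,q\}$-group has at most one subgroup of order $p$ and at most one of order $q$, so every maximal cyclic subgroup of $G$ has at most two subgroups of prime order. Fix a maximal cyclic subgroup $M_0$ and let $\mathcal{R}$ denote its set of subgroups of prime order, so $1\le|\mathcal{R}|\le 2$, and recall that $K(G)$ equals the intersection of the maximal cyclic subgroups of $G$ (Theorem 2 of \cite{cycels}). Every nontrivial subgroup of the cyclic group $M_0$ contains a member of $\mathcal{R}$, so every maximal cyclic subgroup $M$ with $M\cap M_0\ne 1$ contains some member of $\mathcal{R}$. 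If some maximal cyclic $M$ has $M\cap M_0=1$, then $M\ne M_0$ and the remark already gives $\textnormal{diam}(\Delta(G))\ne 2$, a contradiction. Otherwise every maximal cyclic subgroup contains a member of $\mathcal{R}$. If $|\mathcal{R}|=1$, that single member lies in every maximal cyclic subgroup, hence in $K(G)=1$, which is absurd. If $\mathcal{R}=\{R_p,R_q\}$ with $|R_p|=p$ and $|R_q|=q$, then from $R_p\not\le K(G)$ there is a maximal cyclic $M_1$ with $R_p\not\le M_1$, forcing $R_q\le M_1$; symmetrically there is a maximal cyclic $M_2$ with $R_q\not\le M_2$ and $R_p\le M_2$. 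Then $M_1\ne M_2$, and if $M_1\cap M_2\ne 1$ it contains a prime-order subgroup of $M_1$---either $R_q$, or the unique order-$p$ subgroup of $M_1$, which, lying in $M_2$, must equal $R_p$---so $R_q\le M_2$ or $R_p\le M_1$, both impossible. Hence $M_1\cap M_2=1$ and the remark again contradicts $\textnormal{diam}(\Delta(G))=2$. Therefore $K(G)\ne 1$, completing the proof.

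The step I expect to be the genuine obstacle is recognising that the maximal-cyclic-subgroup viewpoint is the efficient one. A direct element-by-element attack---beginning from Lemma \ref{elementsofprimeorderinapqgroup}, which says every element of order $p$ commutes with every element of order $q$, and then hunting for a pair of nonadjacent vertices with no common neighbour---stalls exactly when a $q$-element acts non-faithfully on the unique subgroup of order $p$: the powers of such an element lying in the kernel of the action keep surfacing as common neighbours, and eliminating them appears to require precisely the dichotomy (``some prime-order subgroup lies in every maximal cyclic subgroup, or two maximal cyclic subgroups meet trivially'') that the argument above produces immediately.
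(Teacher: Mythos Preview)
Your argument is correct, and it takes a genuinely different route from the paper's. The paper argues the forward direction element-by-element: invoking Lemma~\ref{elementsofprimeorderinapqgroup}, it builds explicit elements $x_1y$ and $x_2w$ of order $pq$, analyses a length-two path between them to force a unique subgroup of order $q$, and then in a second stage uses an $N/C$ argument to pin down centrality. Your proof instead works structurally through maximal cyclic subgroups and the description of $K(G)$ as their intersection: either some pair meets trivially (killing diameter~$2$ immediately), or every maximal cyclic subgroup contains one of the at-most-two prime-order subgroups of a fixed $M_0$, and a short pigeonhole produces two maximal cyclics $M_1,M_2$ that again meet trivially. This is shorter and cleaner, and it folds the proof of Theorem~\ref{Main 2} into the same stroke; the price is that you import the characterisation of $K(G)$ from \cite{cycels}, whereas the paper's proof is self-contained and yields the intermediate ``unique central prime-order subgroup'' statement directly rather than recovering it afterwards via Lemma~\ref{equiv}. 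One small remark on your closing paragraph: the direct element-chasing approach does not actually stall in the situation you describe---the paper carries it through---so that commentary undersells the alternative rather than identifying a genuine obstruction.
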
 

\begin{proof}
Assume that the group $G$ has a unique subgroup $Z$ of prime order and that $Z\le Z(G)$.
Write $Z=\langle z\rangle$ and, without loss of generality, we may assume that $|Z|=p$.
Now, let $g\in G^{\#}$.
If $p$ does not divide $o(g)$, then $g\sim z$ since commuting elements with coprime orders generate a cyclic subgroup.
If $p$ does divides $o(g)$, then, since $Z$ is the unique subgroup of order $p$, we have that $\langle g^{n}\rangle=Z$ for some integer $n$.
So, again, $g\sim z$.
Hence, $z$ is a universal vertex in $\Delta(G)$, which yields the bound $\textnormal{diam}(\Delta(G))\le 2$.
As $G$ is non-cyclic, we conclude that $\textnormal{diam}(\Delta(G))=2$.

Next, assume that $\textnormal{diam}(\Delta(G))=2$.
We start by establishing the existence of a unique subgroup of prime order.
If $G$ has a unique subgroup of order $p$, then we are done.
So, suppose that $P_{1}=\langle x_{1}\rangle$ and $P_{2}=\langle x_{2}\rangle$ are distinct subgroups of order $p$.
Note that $d(x_{1},x_{2})=2$.
Write $x_{1}\sim y\sim x_{2}$ for $y\in G^{\#}$.
Replacing $y$ if necessary, we may assume that $o(y)=q$.
Let $w$ be an element of order $q$.
Consider the products $x_{1}y$ and $x_{2}w$.
The elements $x_{1}$ and $y$ have coprime orders.
The elements $x_{2}$ and $w$ have coprime orders.
These elements are commuting pairs of elements by Lemma \ref{elementsofprimeorderinapqgroup}.
So, $\langle x_{1}, y\rangle=\langle x_{1}y\rangle$ and $\langle x_{2},w\rangle=\langle x_{2}w\rangle$.
Now, if $x_{1}y\sim x_{2}w$, then $\langle x_{1}\rangle =\langle x_{2} \rangle$, a contradiction.
Hence, $d(x_{1}y,x_{2}w)=2$.
Write $x_{1}y\sim z\sim x_{2}w$ for $z\in G^{\#}$.
Observe that this situation forces $z$ to be a $q$-element, and so we may assume that $o(z)=q$.
Now, $\langle y\rangle=\langle z\rangle=\langle w\rangle$.
In particular, $\langle y\rangle$ is the unique subgroup of $G$ of order $q$.

In the previous paragraph, we established that $G$ has a unique subgroup of prime order.
Without loss of generality, we proceed by assuming that $G$ has a unique subgroup of order $p$, say $P=\langle x\rangle$.
If $P\le Z(G)$, then we are done.
Assume that $P\not\le Z(G)$, and so there exists some $g\in G$ such that $P\not\le C_G(g)$.
If $p$ divides $o(g)$, then $P=\langle x\rangle\le\langle g\rangle$ since $P$ is the \textit{unique} subgroup of order $p$.
But then, $P\le C_G(g)$, a contradiction.
Hence, $g$ must be a $q$-element.
As we proved in Lemma \ref{elementsofprimeorderinapqgroup}, an element of order $p$ is adjacent to an element of order $q$ in $\Delta(G)$.
Hence, $o(g)=q^{l}$, where $l\ge 2$.
For some positive integer $n$, the subgroup $Q=\langle g^{n} \rangle$ has order $q$.
We claim that $Q$ is the \textit{unique} subgroup of $G$ with order $q$ and that $Q\le Z(G)$.
Let $Q_{0}=\langle y\rangle$ be a subgroup of order $q$.
Note that $\langle x,y\rangle=\langle xy\rangle$ and, by our hypothesis, $d(xy,g)\le 2$.
If $xy\sim g$, then $P=\langle x\rangle\le C_G(g)$, a contradiction.
Hence, $d(xy,g)=2$.
Write $xy\sim h\sim g$ for $h\in G^{\#}$.
If $p$ divides $o(h)$, then $\langle x\rangle =\langle h^{t}\rangle\le C_G(g)$ for some integer $t$.
But then we have arrived at another contradiction.
The element $h$ is therefore a $q$-element, and so $Q_{0}=\langle y\rangle=\langle h^{s}\rangle=\langle g^{n}\rangle=Q$ for some positive integer $s$.
Since $Q_{0}$ was an arbitrary subgroup of $G$ of order $q$, we conclude that $Q$ is the unique subgroup of $G$ with order $q$, as wanted.
Finally, observe that the section $G/C_G(P)$ embeds in $\aut{P}$ and the section $G/C_G(Q)$ embeds in $\aut{Q}$.
By arithmetical considerations, one of these sections is trivial.
Since $P$ is non-central, we must have that $G=C_G(Q)$, and so $Q\le Z(G)$.
\end{proof}

We now prove Theorem \ref{Main 2}.

\begin{proof}[Proof of Theorem \ref{Main 2}]
Assume that $\textnormal{diam}(\Delta(G)) =2$.
By Theorem \ref{pqdiameter2}, we know that $G$ either has a unique subgroup of order $p$ or a unique subgroup of order $q$ and that subgroup is central.
By Lemma \ref{equiv}, we know that either $p$ or $q$ divides $|K (G)|$, and so $K(G) > 1$.

Conversely, suppose $K(G) > 1$.  Then either $p$ or $q$ divides $|K(G)|$.  In light of Lemma \ref{equiv}, we see that $G$ has either a unique subgroup of order $p$ or and a unique subgroup of order $q$ that is central.  Applying Theorem \ref{pqdiameter2}, we see that $\Delta (G)$ has diameter $2$, as desired.
\end{proof}

\end{document}